\documentclass[11pt]{amsart}
\usepackage{amsmath,amssymb}
\usepackage[margin=1.1in]{geometry}
\usepackage{tikz}
\usepackage[hidelinks]{hyperref}

\newtheorem{theorem}{Theorem}
\newtheorem{lemma}[theorem]{Lemma}
\newtheorem{proposition}[theorem]{Proposition}
\newtheorem{corollary}[theorem]{Corollary}
\newtheorem{conjecture}[theorem]{Conjecture}
\theoremstyle{remark}
\newtheorem{remark}[theorem]{Remark}

\newcommand{\np}{n^{+}}
\newcommand{\nz}{n^{0}}
\newcommand{\nm}{n^{-}}
\newcommand{\In}{\mathrm{In}}

\title{The signature of connected line graphs is unbounded}
\author{Luke Francis}
\email{lukefrancis09@icloud.com}
\author{Trevor Uptain}
\email{trevoruptain@gmail.com}
\date{\today}

\begin{document}

\begin{abstract}
Akbari, Elphick, Kumar, Pragada and Tang [Discrete Math.\ 349 (2026) 114953]
conjectured that for every connected graph $G$, the line graph of $G$ has at
most one more positive than negative adjacency eigenvalue; equivalently, the
signature of a connected line graph is at most $1$. We refute the conjecture
with two independently found counterexamples: a $14$-vertex cactus consisting
of two pentagons attached by bridges to adjacent vertices of a square, whose
line graph has inertia $(9,0,7)$ by an exact characteristic-polynomial
certificate, and a $48$-vertex triangle-free graph found by simulated
annealing and verified in exact rational arithmetic. Indeed, chaining copies
of the $14$-vertex graph yields connected graphs on $14k$ vertices whose line
graphs have signature $k+1$ for every $k \ge 1$. The signature of connected
line graphs is therefore unbounded, and no constant-bound repair of the
conjecture is possible.
\end{abstract}

\maketitle

\section{Introduction}

Let $G$ be a finite simple graph of order $n$ and size $m$, with adjacency
matrix $A(G)$. Write $\np(G)$, $\nz(G)$, $\nm(G)$ for the numbers of
positive, zero and negative eigenvalues of $A(G)$ counted with multiplicity,
$\In(G) = (\np,\nz,\nm)$ for the inertia, and
$s(G) = \np(G) - \nm(G)$ for the signature. In \cite{AEKPT}, Akbari et al.\
posed the following conjecture:

\begin{conjecture}[\cite{AEKPT}]\label{conj}
For any connected graph $G$, its line graph $L(G)$ satisfies
$\np(L(G)) \le \nm(L(G)) + 1$.
\end{conjecture}

The authors of \cite{AEKPT} verified the statement for all connected graphs
on at most $9$ vertices and for numerous graphs with up to $100$ edges,
proved it for several families including trees, and observed equality for the
odd cycles $C_{4k+1}$, the kayak paddle graphs $K(4,5,1)$ and $K(5,5,1)$, and
the $5$-prism. They noted that $s(L(C_5 \cup C_5)) = 2$, which is why
connectivity is required, and that the conjecture is open only in the sparse
range $n \le m \le 2n-2$.

The conjecture is consistent with, and stronger than, what was previously
known: Wang and Fan \cite{WF} proved that line graphs satisfy the
Ma--Yang--Li \cite{MYL} signature bound $s(L(G)) \le c_5(L(G))$, where $c_5$
counts cycles of length $\equiv 1 \pmod 4$. That bound permits line-graph signatures
larger than $1$; the content of Conjecture~\ref{conj} was that they
nevertheless do not occur. They do.

In Section~\ref{sec:14} we present a $14$-vertex counterexample with an exact
hand-checkable certificate; Section~\ref{sec:48} gives a $48$-vertex
counterexample found by simulated annealing; and Section~\ref{sec:unbounded}
proves a bridge lemma from which the unboundedness of the signature of
connected line graphs follows.

\section{A 14-vertex counterexample with an exact certificate}\label{sec:14}

Let $G_{14}$ have vertex set
$\{a,b,c,d\} \cup \{u_0,\dots,u_4\} \cup \{v_0,\dots,v_4\}$ and edge set
consisting of the square $abcda$, the pentagons $u_0u_1u_2u_3u_4u_0$ and
$v_0v_1v_2v_3v_4v_0$, and the bridges $au_0$ and $bv_0$: two pentagons
attached by bridges at adjacent vertices of a square
(Figure~\ref{fig:g14}). Then $G_{14}$ is connected with $14$ vertices and
$16$ edges; its graph6 string is \texttt{Ml\_GGCHO??\_@?@?C\_} (the second
character is a lowercase letter l, not a digit), and machine-readable copies
of a numeric edge list and the graph6 string are provided in the ancillary
files.

\begin{figure}[ht]
\centering
\begin{tikzpicture}[scale=1.05,
  v/.style={circle,fill,inner sep=1.6pt},
  lbl/.style={font=\small}]
 
  \coordinate (a) at (-0.9,0.6);
  \coordinate (b) at (0.9,0.6);
  \coordinate (c) at (0.9,-1.2);
  \coordinate (d) at (-0.9,-1.2);
 
  \coordinate (u0) at (-2.2,0.6);
  \coordinate (u1) at (-2.891,1.551);
  \coordinate (u2) at (-4.009,1.188);
  \coordinate (u3) at (-4.009,0.012);
  \coordinate (u4) at (-2.891,-0.351);
 
  \coordinate (v0) at (2.2,0.6);
  \coordinate (v1) at (2.891,1.551);
  \coordinate (v2) at (4.009,1.188);
  \coordinate (v3) at (4.009,0.012);
  \coordinate (v4) at (2.891,-0.351);
 
  \draw (a) -- (b) -- (c) -- (d) -- (a);
  \draw (u0) -- (u1) -- (u2) -- (u3) -- (u4) -- (u0);
  \draw (v0) -- (v1) -- (v2) -- (v3) -- (v4) -- (v0);
  \draw (a) -- (u0);
  \draw (b) -- (v0);
 
  \foreach \p in {a,b,c,d,u0,u1,u2,u3,u4,v0,v1,v2,v3,v4}
    \node[v] at (\p) {};
 
  \node[lbl,above left] at (a) {$a$};
  \node[lbl,above right] at (b) {$b$};
  \node[lbl,below right] at (c) {$c$};
  \node[lbl,below left] at (d) {$d$};
  \node[lbl,below] at (u0) {$u_0$};
  \node[lbl,above] at (u1) {$u_1$};
  \node[lbl,left] at (u2) {$u_2$};
  \node[lbl,left] at (u3) {$u_3$};
  \node[lbl,below] at (u4) {$u_4$};
  \node[lbl,below] at (v0) {$v_0$};
  \node[lbl,above] at (v1) {$v_1$};
  \node[lbl,right] at (v2) {$v_2$};
  \node[lbl,right] at (v3) {$v_3$};
  \node[lbl,below] at (v4) {$v_4$};
\end{tikzpicture}
\caption{The graph $G_{14}$: two pentagons attached by bridges at adjacent
vertices of a square.}
\label{fig:g14}
\end{figure}

\begin{proposition}\label{prop:14}
$\In(L(G_{14})) = (9,0,7)$. Hence $s(L(G_{14})) = 2$ and
Conjecture~\ref{conj} is false.
\end{proposition}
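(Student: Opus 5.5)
The plan is to avoid working with the $16\times 16$ matrix $A(L(G_{14}))$ directly and instead pass to the signless Laplacian. Let $B$ be the $14\times 16$ vertex--edge incidence matrix of $G_{14}$. Then $A(L(G_{14})) = B^{\top}B - 2I_{16}$ and $BB^{\top} = Q = D + A(G_{14})$. Since $B^{\top}B$ and $BB^{\top}$ have the same nonzero eigenvalues, the spectrum of $L(G_{14})$ is $\{q_i - 2 : q_i \in \mathrm{spec}(Q)\}$ together with $-2$ repeated $m-n = 2$ times. Here we use that $Q$ is nonsingular: $G_{14}$ is connected and non-bipartite, since it contains pentagons. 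Consequently
\[
\In(L(G_{14})) = \In(Q-2I) + (0,0,2),
\]
and it suffices to show $\In(M) = (9,0,5)$ for the $14\times 14$ matrix $M = Q - 2I$. The vertices $a,b,u_0,v_0$ have degree $3$ and all others have degree $2$. Hence $M$ is $A(G_{14})$ with diagonal entries $1$ at $a,b,u_0,v_0$ and $0$ elsewhere.

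Next I compute $\In(M)$ by Sylvester's law of inertia, eliminating blocks by Schur complements, i.e.\ by congruences. First I eliminate the path $u_1u_2u_3u_4$. Its block is $A(P_4)$, which has eigenvalues $\pm\tfrac{1+\sqrt5}{2}, \pm\tfrac{\sqrt5-1}{2}$, so its inertia is $(2,0,2)$. Its inverse is explicit:
\[
A(P_4)^{-1} = \begin{pmatrix}0&1&0&-1\\ 1&0&0&0\\ 0&0&0&1\\ -1&0&1&0\end{pmatrix}.
\]
With $e=(1,0,0,1)^{\top}$ recording the neighbours of $u_0$, we get $e^{\top}A(P_4)^{-1}e = -2$. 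So the updated diagonal entry at $u_0$ is $1-(-2)=3>0$. Eliminating $u_0$ contributes one more positive eigenvalue and changes the entry at $a$ to $1-\tfrac13 = \tfrac23$. The pentagon side at $b$ is handled identically, so the two pendant pentagons together contribute inertia $(6,0,4)$.

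What remains is the $4\times4$ matrix on $a,b,c,d$ (in that order):
\[
\begin{pmatrix}2/3&1&0&1\\ 1&2/3&1&0\\ 0&1&0&1\\ 1&0&1&0\end{pmatrix}.
\]
I eliminate the $\{c,d\}$ block $\bigl(\begin{smallmatrix}0&1\\1&0\end{smallmatrix}\bigr)$, which has inertia $(1,0,1)$ and is its own inverse. The coupling block is also $\bigl(\begin{smallmatrix}0&1\\1&0\end{smallmatrix}\bigr)$, so the Schur complement on $\{a,b\}$ is
\[
\begin{pmatrix}2/3&1\\1&2/3\end{pmatrix} - \begin{pmatrix}0&1\\1&0\end{pmatrix} = \tfrac23 I_2,
\]
which has inertia $(2,0,0)$. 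Summing gives $\In(M) = (6,0,4)+(1,0,1)+(2,0,0) = (9,0,5)$, hence $\In(L(G_{14})) = (9,0,7)$ and $s = 2$, contradicting Conjecture~\ref{conj}.

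The main risk is arithmetic: the pivot entries (the value $-2$, the resulting $3$ and $\tfrac23$, and the cancellation producing $\tfrac23 I_2$) must be exactly right, since a sign slip would change the count. As an independent cross-check, I would compute the characteristic polynomial of $M$ exactly. $M$ is real symmetric, so all its roots are real. By Descartes' rule the number of sign changes in its coefficient sequence then equals $n^+(M)$, and a nonzero constant term confirms $\nz = 0$. This is the ``exact characteristic-polynomial certificate'' the abstract refers to.
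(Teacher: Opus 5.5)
Your proof is correct, and it takes a different route from the paper's. The paper works directly with the $16\times16$ matrix $A(L(G_{14}))$. It block-diagonalizes it using the two pentagon reflections and the reflection swapping the pentagons, obtains the exact factorization of $\chi_H$, and counts signs of roots factor by factor with Descartes' rule, using real-rootedness to pin down the quintic.

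You instead reduce to the $14\times14$ matrix $Q-2I$ via the incidence-matrix identity. This is the paper's Lemma~\ref{lem:spectrum}, which appears only later, in Section~\ref{sec:unbounded}. You then compute $\In(Q-2I)$ by Schur-complement congruences. I checked each step:
\begin{itemize}
\item your $A(P_4)^{-1}$ is correct, and $e^{\top}A(P_4)^{-1}e=-2$;
\item the pivot $3$ at $u_0$ (and $v_0$) and the updated entries $\tfrac23$ at $a,b$ are right;
\item the $\{c,d\}$ elimination does leave $\tfrac23 I_2$.
\end{itemize}
So $\In(Q-2I)=(9,0,5)$ and $\In(L(G_{14}))=(9,0,7)$. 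Nonsingularity of $Q$ is not actually needed for the count of $m-n$ extra eigenvalues $-2$, but invoking it is harmless.

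Your route has three advantages:
\begin{itemize}
\item It is fully self-contained and checkable by hand in a few lines. The paper's text instead relies on symmetry-adapted bases and restriction matrices deferred to ancillary files.
\item It is essentially a hand version of the exact rational congruence diagonalization the paper mentions only as a secondary check.
\item It ties in with Section~\ref{sec:unbounded}. Your pivot determinants multiply to $1\cdot3\cdot1\cdot3\cdot(-1)\cdot\tfrac49=-4$, which is exactly the value $\det(Q(G_{14})-2I)=-4$ used in Corollary~\ref{cor:14k}. Also, eliminating a bridged pentagon into a modified diagonal entry at its attachment vertex is in the same spirit as the resolvent computations behind Lemma~\ref{lem:bridge}.
\end{itemize}
The paper's route, in exchange, yields more information: the full characteristic polynomial, hence the exact spectrum rather than just its sign pattern.

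Your closing Descartes cross-check is sound, since for a real-rooted polynomial with nonzero constant term the sign-change count is exact. It is not needed once the congruence computation is done.
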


\begin{proof}
Let $H = L(G_{14})$, so $A(H)$ is a $16 \times 16$ symmetric $0$--$1$ matrix.
A symmetry decomposition of $A(H)$ with respect to the two pentagon
reflections and the reflection exchanging the pentagons splits
$\mathbb{R}^{16}$ into invariant subspaces of dimensions $2, 2, 7, 5$, and
taking determinants of the restriction matrices yields the exact
factorization
\begin{equation}\label{eq:charpoly}
\chi_H(x) = (x-2)(x-1)(x+2)^2 (x^2+x-1)^2 (x^3-2x^2-4x+1)(x^5-x^4-6x^3+3x^2+7x-2);
\end{equation}
the explicit bases and restriction matrices are given in the ancillary
files. All roots are real. The linear factors contribute two positive and
two negative roots, and $(x^2+x-1)^2$ contributes two of each sign. For
$f(x) = x^3-2x^2-4x+1$, Descartes' rule applied to $f(-x)$ gives exactly one
negative root, so $f$ has two positive roots. For
$g(x) = x^5-x^4-6x^3+3x^2+7x-2$, Descartes' rule gives $\np \in \{3,1\}$ and
$\nm \in \{2,0\}$ independently, leaving the four possibilities $(3,2)$,
$(3,0)$, $(1,2)$, $(1,0)$; since all five roots are real and nonzero, the
counts must sum to $5$, which leaves only $(3,2)$. Totaling: $\np = 9$,
$\nm = 7$, and no factor vanishes at zero, so $\nz = 0$.
\end{proof}

The factorization \eqref{eq:charpoly} and the inertia were additionally
verified by exact symbolic computation and by exact rational congruence
diagonalization (Sylvester's law of inertia); see the ancillary files.

\section{A 48-vertex counterexample from simulated annealing}\label{sec:48}

Let $G_{48}$ be the graph on vertex set $\{0,\dots,47\}$ with the $50$ edges
\begin{center}\small\tt
(0,1) (0,2) (0,3) (1,5) (1,12) (1,13) (2,12) (3,16) (3,24) (4,5)\\
(4,6) (5,7) (5,8) (6,31) (7,25) (9,10) (9,11) (11,14) (11,15) (13,22)\\
(13,28) (14,26) (15,27) (16,17) (17,29) (17,30) (18,19) (19,20) (20,31) (21,22)\\
(21,23) (24,43) (25,26) (26,27) (28,32) (29,37) (30,38) (30,39) (32,46) (33,34)\\
(33,35) (34,36) (36,43) (37,39) (38,40) (38,41) (40,42) (41,44) (44,45) (46,47)
\end{center}
$G_{48}$ is connected and triangle-free, with maximum degree $4$, cycle rank
$3$, and minimum cycle basis of lengths $(4,5,5)$. Its line graph $L(G_{48})$
has $50$ vertices and $\In(L(G_{48})) = (26,0,24)$, hence $s(L(G_{48})) = 2$.
The $2$-core of $G_{48}$, however, has
$\In(L(\operatorname{core}_2(G_{48}))) = (11,0,10)$ and line-graph signature
$1$. Exact pruning shows that the additional unit of signature is supplied by
the single pendant edge $(5,8)$; the remaining hanging paths are
signature-neutral pendant-$P_2$ extensions as in Lemma~\ref{lem:pendant}.
Moreover, after suppressing degree-$2$ paths, the subdivision-length profile
of the $2$-core is $(1,3,3,4,5,5)$, whereas the profile of $G_{14}$ is
$(1,1,1,3,5,5)$. Thus the two graphs share the cycle-basis length multiset
$(4,5,5)$ but not the same reduced skeleton or the same mechanism. This was
verified three independent ways: floating-point eigendecomposition (the
spectrum has no eigenvalue within $0.024$ of zero), exact rational congruence
diagonalization, and the signless Laplacian identity of
Lemma~\ref{lem:spectrum} below. The graph was found by simulated annealing
seeded at the equality cases of \cite{AEKPT}; a second, independent
size-minimizing run converged to a different $48$-vertex graph with the same
invariants.

\section{The bridge lemma and unbounded signature}\label{sec:unbounded}

Throughout, $Q(G) = A(G) + D(G)$ denotes the signless Laplacian and, for a
graph $G$ with $2 \notin \operatorname{spec} Q(G)$, we write $\varphi_G(x)$
for the $(x,x)$ entry of $(Q(G) - 2I)^{-1}$.

\begin{lemma}\label{lem:spectrum}
For any graph $G$ (not necessarily connected) with $m \ge n$, the spectrum of
$A(L(G))$ consists of $q_i - 2$ over all $n$ signless Laplacian eigenvalues
$q_i$ of $G$, together with $m-n$ additional copies of $-2$. Consequently
$s(L(G)) = 2\,\#\{q_i > 2\} + \#\{q_i = 2\} - m$.
\end{lemma}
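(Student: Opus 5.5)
We will use the unsigned vertex--edge incidence matrix $B$ of $G$, the $n\times m$ $0$--$1$ matrix with $B_{ve}=1$ exactly when $v$ is an endpoint of $e$. Two identities, checked entrywise, drive the proof:
\[
BB^{\mathsf T} = D(G)+A(G) = Q(G), \qquad B^{\mathsf T}B = 2I_m + A(L(G)).
\]
The first holds because the diagonal entry counts the edges at $v$, and an off-diagonal entry counts the edges joining $u$ and $v$. The second holds because each edge has two endpoints, and two distinct edges share at most one endpoint in a simple graph, sharing one exactly when they are adjacent in $L(G)$.

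The spectral claim then follows from the standard fact that $BB^{\mathsf T}$ and $B^{\mathsf T}B$ have the same nonzero eigenvalues with the same multiplicities. One way to see this is the singular value decomposition of $B$; another is the identity $x^m\det(xI_n-BB^{\mathsf T}) = x^n\det(xI_m-B^{\mathsf T}B)$. Since $m\ge n$, this identity says the $m$ eigenvalues of $B^{\mathsf T}B$ are exactly the $n$ eigenvalues $q_1,\dots,q_n$ of $Q(G)$, zeros included, together with $m-n$ further zeros.

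Subtracting $2I_m$, the spectrum of $A(L(G))$ is $\{q_i-2\}_{i=1}^n$ together with $m-n$ copies of $-2$. Connectivity plays no role here, because both identities are purely local, so the statement holds for disconnected $G$ as claimed.

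For the signature formula, we count signs. The eigenvalue $q_i-2$ is positive, zero or negative according as $q_i>2$, $q_i=2$ or $q_i<2$, and the $m-n$ extra eigenvalues are all negative. Write $p=\#\{q_i>2\}$, $z=\#\{q_i=2\}$ and $r=\#\{q_i<2\}$, so that $p+z+r=n$. Then
\[
s(L(G)) = p-\bigl(r+(m-n)\bigr) = p-(n-p-z)-m+n = 2p+z-m .
\]

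The only point needing any care is the multiplicity bookkeeping. Zero eigenvalues of $Q(G)$, one for each bipartite component, must be counted among the $q_i$ rather than absorbed into the extra $-2$'s. The determinant identity handles this automatically, so no step is a genuine obstacle.
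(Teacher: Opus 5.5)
Your proof is correct and follows essentially the same route as the paper. Both use the unsigned incidence matrix with $BB^{T}=Q(G)$ and $B^{T}B=A(L(G))+2I$, then equality of nonzero spectra and a multiplicity count; your identity $x^m\det(xI_n-BB^{T})=x^n\det(xI_m-B^{T}B)$ makes explicit the zero-eigenvalue bookkeeping (including zeros of $Q(G)$ from bipartite components) that the paper's one-line sketch leaves implicit.
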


\begin{proof}
With $N$ the unsigned vertex--edge incidence matrix, $N^{T}N = A(L(G)) + 2I$
and $NN^{T} = Q(G)$ have the same nonzero spectra; zero eigenvalues of either
product correspond to the eigenvalue $-2$ of $A(L(G))$ after the shift, and
the multiplicity count gives the statement.
\end{proof}

\begin{lemma}[Bridge lemma]\label{lem:bridge}
Let $G_1, G_2$ be disjoint graphs with $m_i \ge n_i$ such that $2$ is not a
signless Laplacian eigenvalue of either, and let $u \in G_1$, $v \in G_2$.
If $\varphi_{G_1}(u) + \varphi_{G_2}(v) > -1$, then the graph
$G = G_1 \cup G_2 + uv$ satisfies
\[
s(L(G)) = s(L(G_1)) + s(L(G_2)) - 1,
\]
and $2$ is again not a signless Laplacian eigenvalue of $G$.
\end{lemma}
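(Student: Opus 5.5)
The plan is to work entirely with the shifted signless Laplacian $M(G) = Q(G) - 2I$. By Lemma~\ref{lem:spectrum}, $s(L(G))$ depends only on $m$ and on the inertia of $M(G)$, provided $m \ge n$. The point is that adding the bridge $uv$ changes $Q$ by a positive semidefinite rank-one matrix. Setting $w = e_u + e_v$, with $u$ and $v$ indexed in the combined vertex set, we have
\[
Q(G) = Q(G_1) \oplus Q(G_2) + ww^{T},
\qquad\text{so}\qquad
M(G) = M_0 + ww^{T},\quad M_0 = M(G_1) \oplus M(G_2).
\]
By hypothesis $M_0$ is invertible, and $w^{T}M_0^{-1}w = \varphi_{G_1}(u) + \varphi_{G_2}(v)$. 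The cross terms vanish because $M_0^{-1}$ is block diagonal.

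First step: apply the matrix determinant lemma,
\[
\det M(G) = \det M_0 \,\bigl(1 + \varphi_{G_1}(u) + \varphi_{G_2}(v)\bigr).
\]
The hypothesis makes the bracket positive. Hence $\det M(G) \neq 0$, which already gives the last claim that $2 \notin \operatorname{spec} Q(G)$. It also gives $\operatorname{sign}\det M(G) = \operatorname{sign}\det M_0$.

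Second step: show that $M(G)$ and $M_0$ have the same inertia. Since $ww^{T}$ is positive semidefinite of rank one, Weyl interlacing gives $\lambda_i(M_0) \le \lambda_i(M(G)) \le \lambda_{i+1}(M_0)$. So the number of negative eigenvalues either stays the same or drops by exactly one; neither matrix has zero eigenvalues. The sign of the determinant of a nonsingular symmetric matrix is $(-1)^{\#\text{neg}}$. Since that sign is preserved, the parity of the negative count is preserved, which rules out the drop by one. Therefore $\#\{q_i(G) > 2\} = \#\{q_i(G_1) > 2\} + \#\{q_i(G_2) > 2\}$, and there are no eigenvalues equal to $2$ in any of the three graphs.

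Final step: bookkeeping via Lemma~\ref{lem:spectrum}. We have $n = n_1 + n_2$ and $m = m_1 + m_2 + 1 \ge n$, so the lemma applies to $G$, and it also applies to each $G_i$. Writing $p_i = \#\{q(G_i) > 2\}$, we get $s(L(G)) = 2(p_1+p_2) - m_1 - m_2 - 1 = s(L(G_1)) + s(L(G_2)) - 1$.

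The only delicate point is the parity/interlacing argument in the second step, which is what turns the scalar condition $\varphi_{G_1}(u) + \varphi_{G_2}(v) > -1$ into preservation of the inertia. Everything else is direct computation.
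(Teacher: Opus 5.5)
Your proof is correct and follows the paper's argument. In both, the bridge is the rank-one update $ww^{T}$ of $Q(G_1)\oplus Q(G_2)$, the matrix determinant lemma with the block-diagonal inverse reduces everything to the sign of $1+\varphi_{G_1}(u)+\varphi_{G_2}(v)$, and Lemma~\ref{lem:spectrum} does the bookkeeping.

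The only difference is how you turn the nonvanishing determinant into preservation of inertia. The paper uses the path $M_0+t\,ww^{T}$, $t\in[0,1]$, along which the determinant (linear in $t$) never vanishes, so no eigenvalue can cross $0$. You instead use Weyl interlacing (the negative count drops by at most one) together with the parity information in $\operatorname{sign}\det$. This is equally valid and avoids the continuity argument.
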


\begin{proof}
Adding the edge $uv$ adds $ww^{T}$ to $Q$, where $w = e_u + e_v$ and
$Q = Q(G_1) \oplus Q(G_2)$. Since
$\det(Q + t\,ww^{T} - 2I) = \det(Q-2I)\bigl(1 + t\,w^{T}(Q-2I)^{-1}w\bigr)$
is linear in $t$, and rank-one positive semidefinite updates move eigenvalues
weakly upward, no eigenvalue crosses or lands on $2$ for $t \in (0,1]$
provided $w^{T}(Q-2I)^{-1}w > -1$. As $(Q-2I)^{-1}$ is block diagonal, the
cross terms vanish and $w^{T}(Q-2I)^{-1}w = \varphi_{G_1}(u) +
\varphi_{G_2}(v)$. Thus $\#\{q>2\}$ and $\#\{q=2\} = 0$ are unchanged while
$m$ increases by $1$; Lemma~\ref{lem:spectrum} gives the claim.
\end{proof}

\begin{theorem}\label{thm:main}
Let $B$ be a graph with $m \ge n$, $s(L(B)) = \sigma$,
$2 \notin \operatorname{spec} Q(B)$, and a vertex $v$ with
$\beta := \varphi_B(v) > 0$. For $k \ge 1$ let $B_k$ consist of $k$ disjoint
copies of $B$ with a bridge from vertex $v$ of copy $i$ to vertex $v$ of copy
$i+1$. Then
\[
s(L(B_k)) = k\sigma - (k-1) \qquad \text{for all } k \ge 1.
\]
\end{theorem}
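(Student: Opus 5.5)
The plan is induction on $k$, using Lemma~\ref{lem:bridge} at each step. The bridge lemma needs the quantity $\varphi$ at the new attachment vertex, so I will strengthen the induction hypothesis to carry it. Write $v_i$ for the copy of $v$ in the $i$-th copy of $B$. Note that $B_{k+1}$ is obtained from $B_k$ and a fresh copy of $B$ by adding the single bridge $v_kv_{k+1}$.

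The strengthened hypothesis $P(k)$ has four parts:
\begin{enumerate}
\item $m(B_k)\ge n(B_k)$;
\item $2\notin\operatorname{spec}Q(B_k)$;
\item $s(L(B_k))=k\sigma-(k-1)$;
\item $x_k:=\varphi_{B_k}(v_k)>0$.
\end{enumerate}
For $k=1$ this is exactly the hypothesis on $B$, with $x_1=\beta$.

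For the inductive step, assume $P(k)$. Condition (1) for $B_{k+1}$ is immediate, since edge and vertex counts add and the bridge contributes one extra edge. Since $x_k+\beta>0>-1$, Lemma~\ref{lem:bridge} applies with $G_1=B_k$, $u=v_k$, $G_2=B$ and $v=v_{k+1}$. It gives $2\notin\operatorname{spec}Q(B_{k+1})$ and
\[
s(L(B_{k+1}))=\bigl(k\sigma-(k-1)\bigr)+\sigma-1=(k+1)\sigma-k .
\]

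The only real work, and the main obstacle, is propagating the positivity in (4). For this I would compute the $(v_{k+1},v_{k+1})$ entry of $(Q(B_{k+1})-2I)^{-1}$ by Sherman--Morrison. Set $M=(Q(B_k)-2I)\oplus(Q(B)-2I)$ and $w=e_{v_k}+e_{v_{k+1}}$. Then
\[
(M+ww^T)^{-1}=M^{-1}-\frac{M^{-1}ww^TM^{-1}}{1+w^TM^{-1}w}.
\]
Because $M^{-1}$ is block diagonal, $w^TM^{-1}e_{v_{k+1}}=\beta$ and $w^TM^{-1}w=x_k+\beta$. Hence
\[
x_{k+1}=\beta-\frac{\beta^2}{1+x_k+\beta}=\frac{\beta(1+x_k)}{1+x_k+\beta}.
\]
This is positive because $\beta>0$ and $x_k>0$. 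The denominator is nonzero precisely by the bridge-lemma condition, which also guarantees that the updated matrix is invertible. This completes $P(k+1)$ and hence the induction.

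The one point I would double-check is that the chain really uses the same vertex $v$ of each interior copy for both its incoming and outgoing bridge. That causes no difficulty: at each step the lemma only needs $\varphi$ at the current end vertex $v_k$, and the recursion supplies exactly that value.
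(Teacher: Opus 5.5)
Your proof is correct and follows the paper's argument: induction on $k$ via Lemma~\ref{lem:bridge} with $G_1=B_k$ and $G_2=B$, where positivity of $\varphi_{B_k}(v_k)$ is carried along by the same Sherman--Morrison recursion $x_{k+1}=\beta-\beta^2/(1+x_k+\beta)$. The only differences are cosmetic. Your closed form $x_{k+1}=\beta(1+x_k)/(1+x_k+\beta)$ is a tidier way to see the positivity that the paper obtains from the bound $x_{k+1}>\beta/(1+\beta)$, and you track $m\ge n$ and $2\notin\operatorname{spec}Q(B_k)$ explicitly where the paper leaves them implicit.
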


\begin{proof}
Let $a_k = \varphi_{B_k}(v_k)$, where $v_k$ is vertex $v$ of the last copy.
By the Sherman--Morrison formula \cite{SM} applied to
$(Q(B_k)-2I) \oplus (Q(B)-2I) + ww^{T}$, the resolvent entry at the new
attachment vertex satisfies
\[
a_1 = \beta, \qquad a_{k+1} = \beta - \frac{\beta^2}{1 + a_k + \beta}.
\]
If $a_k > 0$ then $\beta^2/(1+a_k+\beta) < \beta^2/(1+\beta)$, so
$a_{k+1} > \beta - \beta^2/(1+\beta) = \beta/(1+\beta) > 0$; by induction
$a_k > 0$ for all $k$, hence the crossing condition $1 + a_k + \beta > 1 > 0$
of Lemma~\ref{lem:bridge} holds at every step (applied to $G_1 = B_k$,
$G_2 = B$), and $2$ never enters the signless Laplacian spectrum. Each bridge
therefore costs exactly one unit of signature while each copy contributes
$\sigma$.
\end{proof}

\begin{corollary}\label{cor:14k}
For every $k \ge 1$ there is a connected graph on $14k$ vertices whose line
graph has signature $k + 1$. In particular
$\sup\{\, s(L(G)) : G \text{ connected} \,\} = \infty$, and no constant-bound
repair of Conjecture~\ref{conj} is possible.
\end{corollary}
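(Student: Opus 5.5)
The plan is to apply Theorem~\ref{thm:main} with $B = G_{14}$ and $\sigma = s(L(G_{14})) = 2$ from Proposition~\ref{prop:14}. Then $B_k$ is connected, since each copy of $G_{14}$ is connected and consecutive copies are joined by a bridge. It has $14k$ vertices, and the theorem gives $s(L(B_k)) = 2k - (k-1) = k+1$. Unboundedness follows at once because $k+1 \to \infty$. Since each $B_k$ is a connected counterexample with signature $k+1$, no bound of the form $\np(L(G)) \le \nm(L(G)) + C$ can hold for all connected $G$.

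So the real work is to check the hypotheses of Theorem~\ref{thm:main} for $B = G_{14}$.

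First, $m = 16 \ge 14 = n$.

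Second, $2 \notin \operatorname{spec} Q(G_{14})$. By Lemma~\ref{lem:spectrum}, an eigenvalue $q = 2$ of $Q$ would give an eigenvalue $0$ of $A(L(G_{14}))$. Proposition~\ref{prop:14} shows $\nz(L(G_{14})) = 0$, so $q=2$ does not occur.

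Third, we need a vertex $v$ with $\varphi_{G_{14}}(v) > 0$. This is the step that needs an actual computation. I would choose $v$ far from the square, for instance $v = u_2$ on a pentagon. Alternatively, $v = c$ preserves the symmetry exchanging the pentagons, which shrinks the computation. One route is the spectral formula
\[
\varphi(v) = \sum_i \frac{x_i(v)^2}{q_i - 2},
\]
where the $x_i$ are orthonormal eigenvectors of $Q$. A cleaner route is to solve $(Q - 2I)y = e_v$ exactly, using the reflection symmetries to reduce the number of unknowns, and read off $y_v$. This is a small rational linear system: the pentagon and square pieces give short recurrences along the paths, because off the attachment vertices $(Q-2I)$ acts as $y_{j-1} + y_{j+1}$ at degree-$2$ vertices. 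I would also invoke Lemma~\ref{lem:bridge}'s viewpoint and assemble $\varphi$ by Schur complements from the pieces. For instance, the pentagon $C_5$ has $Q - 2I = A(C_5)$ with eigenvalues $2\cos(2\pi j/5)$, none zero, so $\varphi_{C_5}$ is explicit. One then glues on the bridges and the square via the Sherman--Morrison steps already used in the proof of Theorem~\ref{thm:main}.

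The main obstacle is that positivity of $\varphi(v)$ is not automatic. If $\beta \le 0$ for the first vertex I try, Theorem~\ref{thm:main} does not apply as stated. In that case I would try another vertex (the candidates being $c$, $d$, $u_1$, $u_2$ up to symmetry) and pick one with $\beta>0$, noting that the chaining vertex may be any vertex of $G_{14}$. Failing that, I would fall back on Lemma~\ref{lem:bridge} directly. Its condition only needs $a_k + \beta > -1$, which could be checked via the same recurrence if $\beta$ is small and negative: the fixed-point analysis of $a \mapsto \beta - \beta^2/(1+a+\beta)$ would show the iterates stay above $-1-\beta$. I expect a suitable vertex to give $\beta > 0$, with exact rational value read off from the ancillary restriction matrices, and that completes the proof.
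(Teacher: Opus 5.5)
Your reduction is the paper's: take $B=G_{14}$ and $\sigma=2$ in Theorem~\ref{thm:main}, so $B_k$ is connected on $14k$ vertices with $s(L(B_k))=2k-(k-1)=k+1$. Your argument that $2\notin\operatorname{spec}Q(G_{14})$, via Lemma~\ref{lem:spectrum} and $\nz(L(G_{14}))=0$ from Proposition~\ref{prop:14}, is correct. It is a tidy substitute for the paper's computation $\det(Q(G_{14})-2I)=-4$.

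The gap is the one hypothesis that is not formal: you never show that any vertex has $\beta=\varphi_{G_{14}}(v)>0$. You only say you expect it. Your fallback for $\beta\le 0$ rests on a fixed-point analysis you do not carry out, and the positivity induction in the proof of Theorem~\ref{thm:main} starts from $a_1=\beta>0$, so it does not cover that case. The paper closes this step by exhibiting $\beta=\varphi_{G_{14}}(v_4)=\tfrac12$.

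Your own Schur-complement plan finishes it in a few lines.
Let $M=Q(G_{14})-2I$; its diagonal is $\deg-2$, namely $1$ at $a,b,u_0,v_0$ and $0$ elsewhere.

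On a pentagon block, $Mz=e_{u_0}$ is solved by $z=\tfrac13(1,1,-1,-1,1)$. So eliminating a pentagon lowers the diagonal entry at its square neighbour from $1$ to $\tfrac23$.

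Eliminating both pentagons leaves the matrix $S$ on $(a,b,c,d)$ with diagonal $(\tfrac23,\tfrac23,0,0)$. The system $Sy=e_c$ has solution $y=(-\tfrac32,0,\tfrac32,1)$, so $\varphi(c)=\tfrac32$, and likewise $\varphi(a)=\varphi(b)=\varphi(d)=\tfrac32$. Consistently, $\det M=3\cdot 3\cdot\det S=9\cdot(-\tfrac49)=-4$.

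For pentagon $v$, eliminate pentagon $u$ and the square. The reduced square matrix now has diagonal $(\tfrac23,1,0,0)$, and its inverse has $(b,b)$ entry $1$ (solution $(0,1,0,-1)$). Hence the Schur complement of $M$ onto pentagon $v$ is $A(C_5)+(1-1)e_{v_0}e_{v_0}^{T}=A(C_5)$, whose inverse has every diagonal entry $\tfrac12$.

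So $\varphi=\tfrac12$ at all ten pentagon vertices (your $u_2$ included) and $\tfrac32$ at the four square vertices. Every vertex is admissible and the fallback is never needed.

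A minor slip: the reflection exchanging the pentagons swaps $c\leftrightarrow d$ (and $a\leftrightarrow b$), so $v=c$ does not preserve it. The vertex $c$ is fixed only by the two pentagon reflections.
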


\begin{proof}
Apply Theorem~\ref{thm:main} to $G_{14}$ with $v$ the pentagon vertex
labeled $v_4$ (numeric label 13 in the ancillary edge list): exact computation gives $\det(Q(G_{14}) - 2I) = -4$, so
$2 \notin \operatorname{spec} Q(G_{14})$, and
$\beta = \varphi_{G_{14}}(v_4) = \tfrac12 > 0$. By
Proposition~\ref{prop:14}, $s(L(G_{14})) = 2$, and so $s(L(B_k)) = k+1$ by
Theorem~\ref{thm:main}.
\end{proof}

The same construction applies to $B = G_{48}$, where
$\det(Q(G_{48})-2I) = 4$ and the $48$ resolvent entries take the values
$\tfrac12, \tfrac32, \tfrac52, \tfrac72, \tfrac92$ with multiplicities
$19, 15, 8, 4, 2$; chains of $G_{48}$ were the route by which
Theorem~\ref{thm:main} was first found and verified.

A complementary tool grows counterexamples without changing the signature:

\begin{lemma}\label{lem:pendant}
Let $F$ be any graph, $v \in V(F)$, and form $F'$ by adding new vertices
$x,y$ and edges $vx, xy$. Then $\In(L(F')) = \In(L(F)) + (1,0,1)$, so
$s(L(F')) = s(L(F))$.
\end{lemma}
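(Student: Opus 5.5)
The plan is to work directly with the adjacency matrix of $L(F')$ and use the standard fact that a pendant vertex together with its neighbour splits off a $2\times 2$ hyperbolic block under congruence. Then Sylvester's law of inertia finishes the argument. No signless Laplacian machinery (Lemma~\ref{lem:spectrum}) is needed, and there are no hypotheses on $F$ beyond finiteness.

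First I will describe $L(F')$ combinatorially. Its vertex set is $E(F)\cup\{e_1,e_2\}$ with $e_1 = vx$ and $e_2 = xy$. The induced subgraph on $E(F)$ is exactly $L(F)$, since adding edges to $F$ does not change adjacency among the old edges. The new vertex $e_1$ is adjacent to every edge of $F$ incident with $v$ and to $e_2$, because $e_1$ and $e_2$ share $x$. The vertex $e_2$ is adjacent only to $e_1$, because $y$ is new and $x$ has degree $2$ in $F'$. So $e_2$ is a pendant vertex of $L(F')$ with unique neighbour $e_1$, and deleting both recovers $L(F)$.

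Next, order the vertices as $(E(F), e_1, e_2)$ and write
\[
A(L(F')) = \begin{pmatrix} A(L(F)) & b & 0\\ b^{T} & 0 & 1\\ 0 & 1 & 0\end{pmatrix},
\]
where $b$ is the $0$--$1$ indicator of the edges of $F$ at $v$. For each $w\in E(F)$ with $b_w = 1$, subtract row $e_2$ from row $w$ and column $e_2$ from column $w$. Row $e_2$ has a single nonzero entry, a $1$ in column $e_1$, so this clears the $(w,e_1)$ and $(e_1,w)$ entries. It leaves every other entry unchanged, including $A(L(F))$, because the $(e_2,e_2)$ and $(w,e_2)$ entries are $0$. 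These operations amount to a congruence $P^{T}AP$ with $P$ unimodular, and the result is $A(L(F)) \oplus \begin{pmatrix}0&1\\1&0\end{pmatrix}$. The $2\times 2$ block has eigenvalues $\pm 1$, so it has inertia $(1,0,1)$. By Sylvester's law of inertia, $\In(L(F')) = \In(L(F)) + (1,0,1)$, and hence the signature is unchanged.

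I do not expect any step to be a real obstacle. The only point needing care is checking that the elimination leaves the $A(L(F))$ block and the diagonal entry at $e_1$ untouched. This holds because $e_2$ is adjacent only to $e_1$ and has zero diagonal. The degenerate case where $v$ is isolated in $F$ (so $b = 0$) is already block diagonal and needs no elimination at all.
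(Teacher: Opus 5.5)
Your proof is correct and is essentially the paper's argument. The paper writes $A(L(F'))$ in the same block form and applies Schur-complement congruence using $J^{-1}=J$ and $CJ^{-1}C^{T}=0$; your elimination, subtracting row and column $e_2$ from those indexed by the edges at $v$, is exactly that congruence written out entry by entry, and Sylvester's law then gives the $(1,0,1)$ increment in both versions.
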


\begin{proof}
Order the vertices of $L(F')$ by the old edges, then $vx$, then $xy$. With
$B = A(L(F))$ and $b$ the indicator of old edges at $v$,
\[
A(L(F')) = \begin{pmatrix} B & C \\ C^{T} & J \end{pmatrix},
\quad C = \begin{pmatrix} b & 0 \end{pmatrix},
\quad J = \begin{pmatrix} 0 & 1 \\ 1 & 0 \end{pmatrix}.
\]
Since $J^{-1} = J$ and $CJ^{-1}C^{T} = 0$, Schur-complement congruence gives
$A(L(F')) \cong_{\mathrm{cong}} B \oplus J$, and $J$ contributes one positive
and one negative eigenvalue.
\end{proof}

\section{Remarks}

\begin{remark}[Minimality]
An exhaustive enumeration of connected subcubic graphs found no
counterexample through $13$ vertices, and at $14$ vertices exactly one
isomorphism class with line-graph signature $2$, represented by $G_{14}$.
This is a screening result among subcubic graphs, not a claim that $14$ is
the minimum order over all connected graphs. Determining the minimum order of
a connected graph with $s(L(G)) = t$ for each $t \ge 2$ remains open.
\end{remark}

\begin{remark}[Mechanism]
The two base graphs share the minimum-cycle-basis length multiset $(4,5,5)$,
but their reduced skeletons and signature mechanisms differ. The graph
$G_{14}$ is its own $2$-core and already has line-graph signature $2$. By
contrast, the $2$-core of $G_{48}$ has line-graph signature $1$; a single
pendant edge raises the signature to $2$, while the remaining hanging paths
preserve signature by Lemma~\ref{lem:pendant}. Their suppressed subdivision
profiles are respectively $(1,1,1,3,5,5)$ and $(1,3,3,4,5,5)$. In
particular, cycle-basis lengths alone do not determine line-graph signature.
A structural explanation encompassing both mechanisms would be interesting.
\end{remark}

\begin{remark}[Prior work and independent discovery]
After submitting the first version, we learned of independent contemporaneous
work by Andrea Paone. Paone's Version~1 deposit, dated July~22, 2026
\cite{PaoneV1}, gives a graph isomorphic to $G_{14}$ with the same exact
line-graph inertia. It is the earliest public record of this example known to
us. Paone's Version~2, dated July~24, 2026 \cite{PaoneV2}, independently
proves that line-graph signature is unbounded even for connected simple planar
subcubic cactus graphs, using a rooted-module attachment construction.

Our $14$-vertex example was communicated privately to the authors of
\cite{AEKPT} on July~20, and our bridge-based unbounded family was described
in correspondence dated July~21--23; our arXiv submission was made on
July~24. We therefore describe the results as independent and contemporaneous
and make no exclusive priority claim. The note of Chen and Li \cite{CL}
refutes the different quadratic inertia inequality
$2\np(G) \le \nm(G)(\nm(G)+1)$ and a related order inequality from the same
paper; it does not address the line-graph conjecture.
\end{remark}

\begin{remark}[Provenance]
In their concluding remarks, the authors of \cite{AEKPT} noted that these
conjectures are well suited to AI-based counterexample search. Both
counterexamples arose from precisely such searches, conducted independently
and with different systems: the $14$-vertex example via a search assisted by
ChatGPT~5.6~Pro (OpenAI), the $48$-vertex example and the results of
Section~\ref{sec:unbounded} via a search and proof development assisted by
Claude Fable 5 (Anthropic), with all computations reproduced by separately
coded audits in exact arithmetic. That independent AI-assisted searches converged in the same week on
counterexamples with the same minimum-cycle-basis lengths $(4,5,5)$, while
the graphs have different reduced skeletons and achieve signature $2$ by
different mechanisms, may itself be of interest. Paone's independent searches
and exact certificates are described in \cite{PaoneV1,PaoneV2}.
\end{remark}

\section*{Acknowledgments}
The two counterexamples were found independently in July 2026 and
communicated to the authors of \cite{AEKPT}, who kindly introduced us.
Proposition~\ref{prop:14}, Lemma~\ref{lem:pendant} and the $14$-vertex
example are due to the first author; the $48$-vertex example and
Lemmas~\ref{lem:spectrum} and~\ref{lem:bridge}, Theorem~\ref{thm:main} and
Corollary~\ref{cor:14k} are due to the second author. We are grateful
to Hitesh Kumar for rapid, generous correspondence, for introducing the
authors, and for detailed comments on an earlier draft, and to S.~Akbari,
C.~Elphick, S.~Pragada and Q.~Tang for their paper and the invitation it
contains.

\end{document}